\documentclass[a4paper,reqno,11pt]{amsart}
\usepackage[T1]{fontenc}
\usepackage[utf8]{inputenc}
\usepackage{amssymb, amsmath, amsthm, graphicx, color}
\usepackage[inline]{enumitem}
\usepackage[usenames,dvipsnames,svgnames,table]{xcolor}
\usepackage[foot]{amsaddr}
\usepackage{comment}

\usepackage{booktabs}
\usepackage{multirow}
\usepackage[normalem]{ulem}

\usepackage[longnamesfirst,numbers,sort&compress]{natbib}

\usepackage{comment}

\makeatletter
\def\thm@space@setup{
\thm@preskip=4mm
\thm@postskip=0mm
}
\makeatother

\usepackage{hyperref}
\hypersetup{
    colorlinks,
    linkcolor={RoyalBlue},
    citecolor={RubineRed},
    urlcolor={blue!80!black}
}

\usepackage{cleveref}

\usepackage{mathtools}

\usepackage{thmtools, thm-restate}

\setenumerate{label=\textup{(\roman*)}, noitemsep, topsep=0pt,
labelindent=.2em, leftmargin=*, widest=iii,}
\setitemize{noitemsep, topsep=-\parskip, labelindent=.2em, leftmargin=*, widest=iii,}

\newdimen\mywidth
\sbox0{m}%
\newdimen\mywidthprim
\sbox0{m}%
\newdimen\mywidthprimprim
\sbox0{m}%
\newdimen\mywidthA
\sbox0{m}%
\newdimen\mywidthAprim
\sbox0{m}%
\theoremstyle{plain}
\newtheorem{thm}{Theorem}
\newtheorem*{thm*}{Theorem}

\newtheorem*{lemma*}{Lemma}

\newtheorem*{cor*}{Corollary}

\newtheorem*{corollary*}{Corollary}

\theoremstyle{remark}

\crefname{obs}{Observation}{Observations}
\theoremstyle{definition}

\newtheorem*{conj*}{Conjecture}
\crefname{lem}{Lemma}{Lemmas}
\crefname{thm}{Theorem}{Theorems}
\crefname{cor}{Corollary}{Corollaries}

\newcommand{\Oh}{\mathcal{O}}

\newcommand{\polylog}{\operatorname{polylog}}

\let\leq\leqslant
\let\geq\geqslant

\let\epsilon\varepsilon

\title{Note on the treewidth of graphs excluding a disjoint union of cycles as a minor}

\begin{document}

\author[Joret]{Gwena\"el Joret}
\address[G.~Joret]{D\'epartement d'Informatique, Universit\'e libre de Bruxelles, Belgium}
\email{gwenael.joret@ulb.be}

\author[Micek]{Piotr Micek}
\address[P.~Micek]{Department of Theoretical Computer Science, Jagiellonian University, Kraków, Poland}
\email{piotr.micek@uj.edu.pl}

\thanks{G.\ Joret is supported by the Belgian National Fund for Scientific Research (FNRS). P.\ Micek is supported by the National Science Center of Poland under grant UMO-2023/05/Y/ST6/00079 within the WEAVE-UNISONO program}

\begin{abstract}
For a planar graph $H$, let $f(H)$ denote the minimum integer such that all graphs excluding $H$ as a minor have treewidth at most $f(H)$. 
We show that if $H$ is a disjoint union of $k$ cycles then $f(H)=\Oh(|V(H)| + k \log k)$, which is best possible.  
\end{abstract}

\maketitle

\section{Introduction}

By the celebrated \emph{Grid Minor Theorem} of Robertson and Seymour~\cite{RS1986}, every graph that excludes a fixed planar graph $H$ as a minor has treewidth bounded from above by some function of the number of vertices of $H$.  
For every graph $H$, 
let $f(H)$ be the least nonnegative integer such that 
graphs excluding $H$ as a minor have treewidth at most $f(H)$. 
The current best upper bound for $f(H)$ when $H$ is planar is $\Oh(|V(H)|^9\polylog|V(H)|)$, due to~\citet{CT21}.

Better bounds on $f(H)$ are known in some special cases.  
In particular, the case where $H$ consists of a collection of disjoint cycles is well studied. 
If $H$ is just one cycle, then a theorem of \citet{B03} shows that $f(H) \leq |V(H)|-2$. 
If $H$ is the disjoint union of $k$ triangles, then the classic theorem of~\citet{EP1965} implies that $f(H) \in \Oh(k \log k)$, which is tight.  
More generally, if $H$ is the disjoint union of $k$ cycles of length $\ell$, then $f(H) \in \Oh(k\ell + k \log k)$ as shown by~\citet{MNSW17}, which is again tight.  

In this note, we focus on the case where $H$ is the disjoint union of $k$ cycles, with no constraint on their lengths. 
The study of this case was initiated recently by~\citet{GHOR24}, who proved that  
\[
f(H) \leq \frac{3}{2}|V(H)| +  \Oh(k^2 \log k),  
\]
and by~\citet{HLRW25}, who proved that  
\[
f(H) \in \Oh(|V(H)| \log (k+1) + k \log k \log \ell)
\] 
where $\ell$ denotes the maximum length of a cycle in $H$.  

The following lower bound on $f(H)$ can be shown to hold when $H$ is the disjoint union of $k$ cycles: 
\begin{equation*}
    f(H) \in \Omega(|V(H)| + k \log k).
\end{equation*} 
Indeed, $f(H) \geq |V(H)|-2$ holds since the complete graph on $|V(H)|-1$ vertices has no $H$ minor and has treewidth $|V(H)|-2$. 
Also, $f(H) \in \Omega(k \log k)$ holds because of the existence of $n$-vertex cubic graphs with treewidth $\Omega(n)$ and girth $\Omega(\log n)$ (see e.g.
~\cite{M94}): For an adequate choice of $n\in \Theta(k \log k)$, this gives graphs with treewidth $\Theta(k \log k)$ containing no $k$ vertex-disjoint cycles, and thus having no $H$ minor. 

Our main result is the following upper bound on $f(H)$ that matches the above lower bound up to a constant factor. 
(All logarithms in this paper are in base $2$.)

\begin{thm}
\label{thm:main}
If $H$ is a graph consisting of $k$ vertex-disjoint cycles, then 
\[
f(H) \leq 6|V(H)| + 10k\log k + 10 k \log \log k + 40k.
\]
\end{thm}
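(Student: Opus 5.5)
We argue via a large well-linked set and then build the $k$ cycles one at a time inside it. Suppose $G$ has treewidth $t > 6|V(H)| + 10k\log k + 10k\log\log k + 40k$. By the standard duality between treewidth and well-linked sets (brambles), $G$ contains a set $W$ with $|W| \geq t$ that is well-linked: for any two disjoint subsets $A,B\subseteq W$ of equal size there are $|A|$ vertex-disjoint $A$--$B$ paths. We may first pass to a minimal minor of $G$ with treewidth at least $t$; then deleting or contracting any edge drops the treewidth, which gives control on degrees. In particular we may assume $G$ has no long induced paths of degree-$2$ vertices that could be suppressed without loss, and that $G$ is essentially subcubic after splitting high-degree vertices. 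Let the cycles of $H$ have lengths $\ell_1\ge\dots\ge\ell_k$.

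The key dichotomy concerns short cycles. In a subcubic graph, if every cycle has length more than $c\log m$ then by the Erd\H{o}s--P\'osa style girth argument the graph cannot carry a large well-linked set of size $\gg m$. Quantitatively, a subcubic graph on $n$ vertices with no cycles of length below $2\log n$ has at most about $n/\log n$ ``excess'' edges, so its treewidth is $O(n/\log n)$. Thus as long as the current graph has treewidth above $10k\log k$ or so, it contains a cycle of length $O(\log k)$. Alternatively, if $H$ contains long cycles, we instead find a cycle of length at least $\ell_i$ using $O(\ell_i)$ vertices of $W$, using linkedness to route a long path through many well-linked vertices (in the spirit of Birmel\'e's bound $f(C_\ell)\le \ell-2$).

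The embedding then proceeds greedily. We process the cycles of $H$ in order. For a cycle of length $\ell_i \geq \log k$, we find a cycle $C$ in $G$ of length at least $\ell_i$ whose removal decreases the well-linkedness/treewidth by at most $O(\ell_i)$. Length at least $\ell_i$ suffices since a longer cycle contains $C_{\ell_i}$ as a minor. The key point is to choose $C$ inside a minimal-size region, so that $|V(C)| \le 2\ell_i$ up to additive $O(1)$, or so that $C$ meets a separator of size $O(\ell_i)$. For the short cycles, we repeatedly extract a cycle of length $O(\log k)$ from the subcubic high-treewidth part, each costing $O(\log k+\log\log k)$ treewidth. Summing gives the claimed bound: the long cycles cost $6\sum \ell_i \le 6|V(H)|$, and the $k$ short-cycle extractions cost at most $10k\log k+10k\log\log k+40k$. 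The $\log\log k$ term arises because the girth bound is applied to graphs of size about $k\log k$, giving cycles of length $\log(k\log k)$. Since deleting a cycle's vertices decreases treewidth by at most its length, the remaining graph still has treewidth exceeding the budget for the remaining cycles, and induction on $k$ applies.

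The main obstacle is the charging step for long cycles. We need a cycle of length at least $\ell_i$ with at most $O(\ell_i)$ vertices, that is, not a very long cycle whose deletion destroys too much treewidth. My first attempt: take a shortest cycle among those of length at least $\ell_i$ in a minimal-treewidth minor. Then I would argue via a Birmel\'e-type path/ear argument that such a cycle has length at most $2\ell_i$, since otherwise a chord-free long arc would let us shortcut. Getting the constant $6$ will require care in combining this with the degree reduction.
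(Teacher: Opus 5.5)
Your proposal has a genuine gap, centered on the step you yourself flag as the main obstacle; it is not a matter of constants. Your accounting bounds the treewidth lost by deleting a cycle by its length. So the greedy scheme needs, at every stage, a cycle of length at least $\ell_i$ with only $O(\ell_i)$ vertices, or $O(\log k)$ vertices for the short cycles of $H$. Large treewidth does not supply such cycles. The cubic graphs with treewidth $\Omega(n)$ and girth $\Omega(\log n)$ recalled in the introduction can be taken with $n$ so large that the girth exceeds the entire budget $6|V(H)|+10k\log k+10k\log\log k+40k$, while the treewidth is as large as you like. In such a graph, a shortest cycle of length at least $\ell_i$ is far longer than $2\ell_i$, and not even one deletion can be paid for.

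The same graphs refute your short-cycle step. The girth bound $2\log n$ is in terms of $n=|V(G)|$, which is not bounded by any function of $k$, so treewidth above $10k\log k$ does not force a cycle of length $O(\log k)$. Besides, such a cycle may be shorter than $\ell_i$, and then it does not contain $C_{\ell_i}$ as a minor. You invoke minor-minimality, but give no argument that it controls cycle lengths. Splitting high-degree vertices is also not legitimate here: $G$ is a minor of the split graph, not conversely, so disjoint cycles found after splitting need not give disjoint cycles in $G$. Passing to a subcubic subgraph instead loses a constant factor in treewidth that your accounting ignores.

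The missing idea is to handle the large-girth situation globally rather than cycle by cycle. In the paper, $\ell$ is the \emph{maximum} cycle length in $H$, so any $k$ disjoint cycles of length at least $\ell$ already give an $H$ minor. There are then two cases.
\begin{enumerate}
\item If $G$ has a cycle of length in $[\ell,6\ell]$, the paper deletes it and applies induction to $H$ minus a longest cycle. The cost $6\ell$ is charged to the $\ell$ vertices removed from $H$, which is where the term $6|V(H)|$ comes from.
\item If $G$ has no such cycle, the Erd\H{o}s--P\'osa argument for long cycles of \citet{MNSW17} applies; its only $\ell$-dependent cost comes from medium-length cycles. It yields a set $X$ of at most $10k\log k+10k\log\log k+40k$ vertices meeting all cycles of length at least $\ell$. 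Birmel\'e's theorem then bounds the treewidth of $G-X$ by $\ell-2$, a cost incurred once rather than once per cycle.
\end{enumerate}
Your large-girth obstacle is precisely the second case, and it is resolved by a small hitting set, not by extracting cycles one at a time. In particular, the $k\log k$ and $k\log\log k$ terms come from the size of this hitting set, not from summing per-cycle costs.
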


In both papers~\cite{GHOR24} and~\cite{HLRW25}, the proofs exploit brambles, which are dual objects to tree decompositions, to construct an $H$ minor in case the treewidth is too large.  
In our proof, we follow a different approach; namely, we let a slight modification of the proof of a theorem about the so-called Erdős--Pósa property of long cycles due to \citet{MNSW17} do all the heavy lifting, and we conclude using a short inductive argument, relying on the bound on the treewidth of graphs with no long cycles due to \citet{B03}.

\section{Proof}
The starting point of our proof is the following result of \citet{MNSW17}.

\begin{thm}[\citet{MNSW17}]
\label{thm:EP_long_cycles}
For every integer $\ell \geq 3$ and every integer $k\geq 1$, every graph $G$ either has $k$ vertex-disjoint cycles each of length at least $\ell$, or a vertex subset $X$ of size 
\[
|X| \leq \left\{
\begin{array}{ll}
  6k\ell + 10 k\log k + 10 k\log \log k + 40k   &  \textrm{ if } k\geq 2\\
  0 & \textrm{ if } k = 1
\end{array}
\right.
\]
meeting all cycles of length at least $\ell$. 
\end{thm}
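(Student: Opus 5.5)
The case $k=1$ is immediate. Either $G$ contains a cycle of length at least $\ell$, or the empty set meets all such cycles, since there are none. So the content is the case $k\geq 2$. I would follow the ``frame'' strategy of Simonovits and of Birmel\'e--Bondy--Reed, in the refined form used by \citet{MNSW17}, arguing by induction on $k$.

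\textbf{Step 1: the frame.} Assume $G$ has no $k$ vertex-disjoint cycles of length at least $\ell$ (call these \emph{long} cycles). By induction, $G$ does contain $k-1$ disjoint long cycles. Choose a subgraph $F\subseteq G$ with the following properties:
\begin{itemize}
\item $F$ has maximum degree at most $3$ and minimum degree at least $2$;
\item every cycle of $F$ is long;
\item $F$ is extremal: first maximize the number of degree-$3$ (branch) vertices, then minimize $|V(F)|$.
\end{itemize}
Suppressing degree-$2$ vertices turns $F$ into a cubic multigraph $F'$, and each edge of $F'$ corresponds to a path (a \emph{thread}) in $F$.

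\textbf{Step 2: few branch vertices.} I would show that if $F'$ has at least roughly $s\approx 2k\log k+2k\log\log k+O(k)$ vertices, then $F'$ has $k$ disjoint cycles whose corresponding cycles in $F$ are long. These would contradict the assumption on $G$.
\begin{itemize}
\item For a cubic multigraph, the Erd\H{o}s--P\'osa argument produces $k$ disjoint cycles of length $O(\log s)$ in $F'$.
\item Those cycles are automatically long in $F$, since every cycle of $F$ is long.
\item The work is in tracking the constants, including the $\log\log k$ term coming from $\log s$. This is where the $10k\log k+10k\log\log k+40k$ should come from.
\end{itemize}
So we may assume $F$ has fewer than $s$ branch vertices and hence $O(s)$ threads.

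\textbf{Step 3: the hitting set.} Let $X$ consist of:
\begin{itemize}
\item all branch vertices of $F$;
\item on each thread, vertices spaced roughly every $\ell$ apart, so that no subpath of a thread avoiding $X$ has $\ell$ or more vertices.
\end{itemize}
Using the minimality of $|V(F)|$, I would argue that threads are short in total: a long thread could be shortcut by a path of $G$ without destroying the frame properties. This should bound the total thread length by about $6k\ell$ plus lower-order terms, giving the claimed bound on $|X|$.

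It remains to show $X$ meets every long cycle $C$ of $G$. Suppose $C$ avoids $X$. Then either $C$ is disjoint from $F$, or $C$ meets $F$ only inside short segments of threads. In either case I would use $C$ to re-route and enlarge $F$: add a cycle component, or add an ear creating two new branch vertices while keeping all cycles long. This contradicts the maximality of $F$.

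\textbf{Main obstacle.} I expect the hardest part to be this last exchange argument. One must check that the new ear or re-routing preserves that every cycle in the modified frame is long. This is exactly where the spacing of the thread vertices, about $\ell$, and the constant $6$ enter. I would first try the case analysis of \citet{MNSW17} on how $C$ enters and leaves a thread segment, handling separately a single segment versus two different segments.
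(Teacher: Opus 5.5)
\textbf{There is a genuine gap: your Step 3 cannot give the bound, and the missing ingredient is the reduction on medium-length cycles.}

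Minimality of $|V(F)|$ gives no control on thread lengths. Take $k=2$ and let $G$ be a theta graph: three internally disjoint $u$--$v$ paths, each of length $N$ with $N$ huge. Every cycle has length $2N\geq\ell$, and no two cycles are disjoint. The only subgraph with a branch vertex satisfying your frame conditions is $G$ itself, so there is no shortcut. Your $X$ then has about $3N/\ell$ vertices, far more than the allowed $12\ell+100$, even though $\{u\}$ alone meets every cycle. Your accounting also fails on its own terms: total thread length about $6k\ell$ with spacing $\ell$ would give about $6k$ vertices, not $6k\ell$. Note that here $2N>6\ell$, so this $G$ has no cycle of length in $[\ell,6\ell]$. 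In that regime the paper (Theorem~\ref{thm:EP_long_cycles_without_medium_length_cycles}) says the bound is $10k\log k+10k\log\log k+40k$, independent of $\ell$ and of thread lengths. With possibly $\Theta(k\log k)$ threads, that leaves only $O(1)$ hitting vertices per thread on average. Spacing hitting vertices every $\ell$ along threads is therefore the wrong design.

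The paper identifies the actual source of the $6k\ell$ term as a preliminary reduction that your proposal never makes. If $G$ has a cycle $C$ with $\ell\leq|C|\leq 6\ell$, then $G-V(C)$ has no $k-1$ disjoint cycles of length at least $\ell$. Induction on $k$ then gives a hitting set $X'$ with $|X'|\leq f(k-1,\ell)$, and $X'\cup V(C)$ has size at most $f(k-1,\ell)+6\ell\leq f(k,\ell)$. Only when no such cycle exists is the frame argument run. In that case every cycle of length at least $\ell$ has length greater than $6\ell$, and this slack is what lets the exchange argument work with a hitting set whose size does not depend on $\ell$. In your proposal the constant $6$ has no genuine source.

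A second, more easily fixed, problem is your extremality order (maximize branch vertices, then minimize $|V(F)|$). An extremal $F$ then has no cycle components, so a long cycle disjoint from $F$ contradicts nothing. For instance, if $G$ is a single long cycle, then $F$ and $X$ are both empty. You need a secondary criterion that rewards additional components.
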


A close inspection of the proof of \cref{thm:EP_long_cycles} in \cite{MNSW17} reveals that the $6k\ell$ term in the bound is only due to the potential existence of cycles in $G$ whose lengths are in the interval $[\ell, 6\ell]$. 
In this case, the proof proceeds by removing the vertex set of such a cycle from $G$ and applying induction on the resulting graph, incurring a cost of at most $6\ell$ per cycle removed. 
For our purposes, the key observation is that, if the graph $G$ under consideration has no cycle of length between $\ell$ and $6\ell$, then the extra $6k\ell$ term in the bound on the size of $X$ can be dropped: 

\begin{thm}[\citet{MNSW17}, implicit]
\label{thm:EP_long_cycles_without_medium_length_cycles}
For every integer $\ell \geq 3$ and every integer $k\geq 1$, every graph $G$ 
with no cycle of length at least $\ell$ and at most $6\ell$
either has $k$ vertex-disjoint cycles each of length at least $\ell$, or a vertex subset $X$ of size 
\[
|X| \leq \left\{
\begin{array}{ll}
  10 k\log k + 10 k\log \log k + 40k   &  \textrm{ if } k\geq 2 \\
  0 & \textrm{ if } k = 1
\end{array}
\right.
\]
meeting all cycles of length at least $\ell$. 
\end{thm}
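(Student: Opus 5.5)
The plan is to re-run the proof of \cref{thm:EP_long_cycles} from \cite{MNSW17} verbatim. The only change is that the branch producing the $6k\ell$ term can never fire under the extra hypothesis. I will argue by induction on $k$ (and on $|V(G)|$ within a fixed $k$), exactly as in \cite{MNSW17}, with target bound $g(k) := 10k\log k + 10k\log\log k + 40k$ for $k\geq 2$ and $g(1)=0$.

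The case $k=1$ is trivial. If $G$ has no cycle of length at least $\ell$, then $X=\emptyset$ works; otherwise $G$ contains one such cycle, which is the desired packing.

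For $k\geq 2$, first reduce to the case where $G$ is $2$-connected with minimum degree at least $3$, using the same standard reductions as in \cite{MNSW17}. These are: deleting vertices of degree at most $1$, suppressing degree-$2$ vertices while recording edge lengths, and handling cut vertices by splitting and recombining via the induction hypothesis. The hypothesis ``no cycle of length in $[\ell,6\ell]$'' is preserved by each of these operations, because they do not create new cycles or change cycle lengths (in the weighted sense used in \cite{MNSW17}), and subgraphs inherit it.

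The core of the argument in \cite{MNSW17} splits into two cases according to whether $G$ contains a cycle $C$ with $\ell \leq |C| \leq 6\ell$.
\begin{itemize}
\item If such a cycle exists, \cite{MNSW17} deletes $V(C)$, applies induction with $k-1$, and pays $|V(C)|\leq 6\ell$. This is the sole source of the $6k\ell$ term. Under our hypothesis this case cannot occur.
\item Otherwise, every cycle of length at least $\ell$ has length greater than $6\ell$. The remaining argument of \cite{MNSW17} then applies: a counting or expansion argument on the structure obtained by contracting short cycles (a large-girth, high-treewidth type argument in the spirit of Erdős--Pósa, producing the $k\log k$ and $k\log\log k$ terms) either yields $k$ disjoint long cycles or a small hitting set.
\end{itemize}
In the recursive calls of this second branch, the graphs passed to induction are subgraphs of $G$. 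They therefore still have no cycle with length in $[\ell,6\ell]$, so the strengthened induction hypothesis applies. The resulting recurrence is the one from \cite{MNSW17} with the $6\ell$ additive terms removed, and it solves to $g(k)$.

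The main obstacle is verifying that the long-cycle-free branch of \cite{MNSW17} never implicitly produces or uses a medium-length cycle. Concretely, I must check that every auxiliary structure built there (rerouted cycles, cycles through contracted components, and so on) either has length outside $[\ell,6\ell]$ or is only used in a way whose cost does not depend on $\ell$. I would first check the lemmas where cycles are assembled from paths whose lengths are bounded in terms of $\ell$. I would confirm that their only role is to produce a cycle of length at least $\ell$, which in our setting automatically has length greater than $6\ell$, without deleting it at cost $6\ell$. Finally, I must confirm that the recursive constant bookkeeping (the $40k$ term, and the base cases $k=2,3$) still closes without the $6k\ell$ slack.
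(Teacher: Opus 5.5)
Your proposal is correct and takes the same route as the paper. The paper likewise only observes that, in the proof of \cref{thm:EP_long_cycles} in \cite{MNSW17}, the $6k\ell$ term comes solely from the branch that deletes a cycle of length in $[\ell,6\ell]$ (at cost at most $6\ell$) and recurses; that branch never fires here because subgraphs inherit the hypothesis, and the rest of the argument, with its $10k\log k+10k\log\log k+40k$ bound, goes through unchanged. The specific internal steps you attribute to \cite{MNSW17} (degree reductions, cut-vertex splitting, contracting short cycles) are speculative and play no role in this observation.
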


We will also use the following result of \citet{B03} already mentioned in the introduction.

\begin{thm}[\citet{B03}]
\label{thm:Birmele} 
For every integer $\ell \geq 3$, every graph with no cycle of length at least $\ell$ has treewidth at most $\ell-2$. 
\end{thm}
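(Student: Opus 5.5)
We prove \cref{thm:Birmele} by building an explicit tree decomposition indexed by a depth-first search tree. Tree decompositions of the components of a graph can be joined together, so we may assume $G$ is connected. Fix a root $r$ and let $T$ be a DFS spanning tree of $G$ rooted at $r$. The one property of DFS trees we need is that every edge of $G$ joins a vertex to one of its ancestors in $T$. The decomposition tree will be $T$ itself.

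For each vertex $v$, let $T_v$ be the subtree of $T$ rooted at $v$ (so $v$ counts as its own descendant). Define the bag
\[
B_v = \{v\} \cup \{u : u \text{ is a proper ancestor of } v \text{ and } u \text{ has a neighbour in } V(T_v)\}.
\]
We check the three axioms in turn.

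\emph{Edges are covered.} Let $uw$ be an edge with $u$ a proper ancestor of $w$. Then $w \in V(T_w)$ is a neighbour of $u$, so $u \in B_w$, and also $w \in B_w$.

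\emph{Each vertex occupies a connected subtree.} Fix $u$. Every vertex $v$ with $u \in B_v$ is $u$ itself or a descendant of $u$. Suppose $v \neq u$ is such a vertex and $v'$ lies on the tree path strictly between $u$ and $v$. Then $T_{v'} \supseteq T_v$, so $u$ still has a neighbour in $T_{v'}$, and hence $u \in B_{v'}$. So the set of such $v$ is a subtree of $T$ containing $u$.

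\emph{Bag sizes.} This is the only step that uses the hypothesis on cycle lengths. Suppose $u \in B_v \setminus \{v\}$, witnessed by an edge $uw$ with $w \in V(T_v)$. Write $d(\cdot)$ for depth in $T$.
\begin{itemize}
\item If $uw$ is the tree edge, then $u$ is the parent of $v$ (and $w = v$), so $d(v) - d(u) = 1$.
\item Otherwise, the tree path from $u$ to $w$ together with the edge $uw$ forms a cycle of length $d(w) - d(u) + 1$. By hypothesis this length is at most $\ell - 1$, so
\[
d(v) - d(u) \leq d(w) - d(u) \leq \ell - 2 .
\]
\end{itemize}
In both cases $u$ is one of the $\ell - 2$ nearest proper ancestors of $v$. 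Therefore $|B_v| \leq \ell - 1$, and the decomposition has width at most $\ell - 2$.

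The main obstacle is recognising that a DFS tree is the right structure. Once all edges are ancestor–descendant edges, each back edge closes a cycle whose length equals the depth difference of its endpoints plus one. The bound on cycle lengths then confines each bag to a window of $\ell - 1$ consecutive vertices on a root-to-leaf path, and the remaining verifications are routine.
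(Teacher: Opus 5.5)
Your proof is correct and complete. The paper does not prove this statement itself. It quotes it from Birmel\'e's note \cite{B03} and uses it as a black box in the proof of \cref{thm:main}, both for the case $k=1$ and to bound the treewidth of $G-X$. So there is no in-paper argument to compare against; what you have written is the standard depth-first-search proof of this bound.

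The key idea is the one you identify. Every edge of $G$ joins a vertex to one of its ancestors in $T$. A non-tree edge $uw$ closes a cycle of length $d(w)-d(u)+1\leq \ell-1$, so every edge spans at most $\ell-2$ levels of $T$.

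Two details you leave implicit are harmless. In the tree-edge case, $w=v$ because the parent of any $w\in V(T_v)\setminus\{v\}$ lies in $T_v$. In the non-tree case, $d(w)-d(u)\geq 2$, so the tree path plus $uw$ really is a cycle.

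You could also simplify by taking $B_v$ to be $v$ together with all of its (at most $\ell-2$) nearest proper ancestors. The depth bound on edges then gives edge coverage at once. Each vertex $u$ then occupies exactly its descendants within depth difference $\ell-2$, which is obviously a subtree. Your leaner bags give the same width but need the extra subtree check you carried out.
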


We may now turn to the proof of our main theorem. 

\begin{proof}[Proof of \cref{thm:main}]
For all integers $h \geq 0$ and $k\geq 1$, let 
\[
g(h, k) := 6h + 10 k\log k + 10 k \log \log k + 40k.
\] 
Let $H$ be an $h$-vertex graph that is the disjoint union of $k$ cycles. 
Let $G$ be a graph with no $H$ minor. 
Our goal is to show that $G$ has treewidth at most $g(h, k)$. 
The proof is by induction on $|V(G)|$. 

Let $\ell$ be the length of a longest cycle in $H$.
If $k=1$, then $G$ has no cycle of length at least $\ell$, and thus by \cref{thm:Birmele} $G$ has treewidth at most
$\ell-2 \leq h \leq g(h, 1)$, as desired. 
Thus, we may assume that $k\geq 2$. 

If $G$ contains a cycle $C$ of length at least $\ell$ and at most $6\ell$, we proceed as follows. 
Let $H'$ be the $h'$-vertex graph obtained from $H$ by removing the vertices of a cycle of length $\ell$.  
Observe that $H'$ is non empty since it consists of $k-1\geq 1$ cycles, and that $G-V(C)$ has no $H'$ minor. 
By induction, $G-V(C)$ has treewidth at most $g(h', k-1)$.    
Adding all the vertices of $C$ in all the bags of a minimum-width tree decomposition of $G-V(C)$, we deduce that $G$ has treewidth at most 
\[
g(h', k-1) + 6\ell 
= g(h', k-1) + 6(h-h')
\leq g(h,k),
\]
as desired. 
Hence, we may assume that no cycle in $G$ has length between $\ell$ and $6\ell$. 

Let $r$ be the maximum number of vertex-disjoint cycles of length at least $\ell$ in $G$. 
Observe that $r \leq k - 1$, since otherwise $G$ would have $H$ as a minor.  
Applying \cref{thm:EP_long_cycles_without_medium_length_cycles} on $G$, we obtain a set $X$ of vertices of $G$ that meets all cycles of length at $\ell$ in $G$ and of size $|X| \leq g(0, r+1)$. 
Since $G-X$ has no cycle of length at least $\ell$, its treewidth is at most $\ell - 2$ by \cref{thm:Birmele}.  
Adding all the vertices of $X$ in all the bags of a minimum-width tree decomposition of $G-X$, we deduce that $G$ has treewidth at most 
\[
g(0, r+1) + (\ell-2)
\leq g(0, k) + h  
\leq g(h, k) 
\]
as desired. 
\end{proof}

\section*{Acknowledgments}

This research was carried out during the second edition of the Structural Graph
Theory Workshop (STWOR) held at the conference center of the University of Warsaw in Chęciny (Poland), June 30th--July 5th, 2024. 
We thank the workshop organizers and other participants for providing a stimulating working environment. 
We also thank the authors of~\cite{HLRW25} for helpful discussions. 

\bibliographystyle{abbrvnat}
\bibliography{bibliography}

\end{document}